\documentclass[12pt, reqno]{amsart}
\usepackage{amsmath, amsthm, amscd, amsfonts, amssymb}
\usepackage{amsmath, amssymb, mathrsfs}
\theoremstyle{plain}
\newtheorem{theorem}{Theorem}[section]
\newtheorem*{mainclaim}{Main Theorem}
\newtheorem{lemma}[theorem]{Lemma}
\newtheorem{proposition}[theorem]{Proposition}
\newtheorem{corollary}[theorem]{Corollary}
\theoremstyle{definition}
\newtheorem{definition}[theorem]{Definition}
\theoremstyle{remark}
\newtheorem{remark}[theorem]{Remark}

\newcommand{\R}{\mathbb R}
\newcommand{\N}{\mathbb N}
\newcommand{\B}[1]{B_{#1}}
\newcommand{\Sph}[1]{S_{#1}}
\newcommand{\Q}[1]{\mathcal Q(#1)}
\newcommand{\DK}{\mathcal D(K)}
\newcommand{\norm}[1]{\left\lVert #1\right\rVert}
\newcommand{\spanR}{\operatorname{span}_{\R}}
\newcommand{\affR}{\operatorname{aff}_{\R}}
\DeclareMathOperator{\Supp}{Supp}
\DeclareMathOperator{\Id}{Id}

\begin{document}
\setcounter{page}{1}

\title[The SMP for Closed Convex Body in Normed Spaces]{The Strong Mankiewicz Property for Closed Convex Body in Normed Spaces}

\author{Ruidong Wang}
\address{School of Mathematical Sciences, Tianjin University of Technology, Tianjin 300384, China}
\email{wangruidong@tjut.edu.cn}

\author{Jianglong Luo}
\address{School of Mathematical Sciences, Tianjin University of Technology, Tianjin 300384, China}

\email{2100056943@qq.com}

\author{Li Tang}
\address{School of Mathematical Sciences, Tianjin University of Technology, Tianjin 300384, China}

\email{tangli@stud.tjut.edu.cn}

\let\thefootnote\relax\footnote{ The study is supported by the Natural Science Foundation of China (Grant No.12271402) and Natural Science Foundation of Tianjin Municipal Science and Technology Commission (Grant No. 23JCYBJC00560, 22JCYBJC00420 )}

\subjclass[2010]{Primary 46B04; Secondary 46B20.}

\keywords{isometry; closed unit ball; convex range; affine extension;
support point; sequence quotient; strong Mankiewicz property.}

\date{Received: xxxxxx; Revised: yyyyyy; Accepted: zzzzzz.
}

\begin{abstract}
Mori and Ozawa~\cite{MO} raised a fundamental open question: whether every convex body in a normed space enjoys the strong Mankiewicz property (SMP). The SMP states that every surjective isometry from the convex body onto a convex subset of an arbitrary real normed space is necessarily real-affine, with no interior assumption imposed on the target convex set. In the present work, we settle this problem affirmatively.
\end{abstract}

\maketitle

\section{Introduction}

The Mazur-Ulam theorem characterizes surjective isometries between real
normed spaces as affine mappings. Mankiewicz \cite{Mankiewicz} extended this conclusion to
surjective isometries between convex bodies, where a convex body is understood to be
a convex set with nonempty norm-interior.

A convex subset $D$ of a normed
space is said to have  the \emph{strong Mankiewicz property} if every surjective
isometry from $D$ onto a convex subset of an arbitrary normed space is real-affine.
The SMP was introduced by Mori and Ozawa~\cite{MO}. Since there exist convex sets with empty interior failing this property, as demonstrated by a counterexample in~\cite{Sc}, they raised the question whether every convex body in a normed space has the SMP. Mori and Ozawa   verified this property
for convex bodies in a Banach space under the assumption that the closed convex hull of the
extreme points of its closed unit ball has nonempty interior.

 As shown in \cite[Lemma~4]{MO}, if the closed unit ball $B_X$ of a Banach space $X$ possesses the SMP, then all convex bodies in $X$ inherit the property.
Consequently, it suffices to verify the SMP solely for the closed unit ball $B_X$.

All spaces considered below are real. For complex normed spaces, all statements are interpreted on their underlying real normed spaces. For a normed space $X$,  we write
\[
  B_{X}=\{x\in X:\|x\|\leq1\},
  \qquad
  S_{X}=\{x\in X:\|x\|=1\}.
\]
Closures, interiors  and boundaries are taken with respect to the norm topology.
For a subset $D\subset X$, we write
$\overline D^{\|\cdot\|}$, $\operatorname{int}_X D$, and $\partial_XD$ when the ambient space $X$ needs to be specified. The notations $\spanR D$ and $\affR D$ stand for the algebraic real linear hull and the algebraic real affine hull of $D$, respectively.

In this paper, we establish the following main theorem, which settles  Mori and Ozawa's problem affirmatively: every convex body in a normed space has the SMP.

\begin{mainclaim}\label{claim:main}
Let $X$ and $Y$ be real normed spaces, let $C\subset Y$ be a nonempty
convex set, and let
\[
  f:B_{X}\longrightarrow C
\]
be a surjective isometry. Then $f$ is affine. More precisely, there exists a
unique real-linear isometric embedding $T:X\to Y$ such that
\begin{equation}\label{eq:extension}
  f(x)=f(0)+Tx\qquad(x\in B_{X}).
\end{equation}
Moreover,
\begin{equation}\label{eq:affine-hull}
  \affR C=f(0)+T(X).
\end{equation}
If $X$ is Banach, then $T(X)$ is closed in $Y$ and
\[
  C-f(0)=B_{T(X)}
\]
with respect to the norm inherited from $Y$.
\end{mainclaim}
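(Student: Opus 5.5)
The plan is to prove that $f$ preserves midpoints, $f\bigl(\tfrac{x+y}{2}\bigr)=\tfrac{f(x)+f(y)}{2}$ for all $x,y\in B_X$. Together with continuity this gives affinity on $B_X$, and the extension and hull statements then follow by routine arguments. For the reduction, I would normalize by setting $g(x)=f(x)-f(0)$, so that $g(0)=0$ and $g$ maps $B_X$ isometrically onto the convex set $C-f(0)\ni 0$.

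The first reduction is to show that $g$ is odd, i.e.\ $g(-x)=-g(x)$, and that $g(tx)=tg(x)$ for $t\in[0,1]$.
\begin{itemize}
\item For segments through $0$: since the target is convex and $g$ is a surjective isometry, $g^{-1}$ is also an isometry from a convex set. The points $g(x)$ and $g(y)$ have a metric midpoint, namely $g\bigl(\tfrac{x+y}{2}\bigr)$, and the target also contains the algebraic midpoint $\tfrac{g(x)+g(y)}{2}$, whose preimage is a metric midpoint of $x,y$ inside $B_X$.
\item For pairs $x,-x$ with $\|x\|=1$: these are diametral points at distance $2$, and I would use the standard Mankiewicz-type argument. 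Consider the set $M(x,y)$ of metric midpoints in $B_X$ and the iterated ``$\delta$-shrinking'' of Väisälä's proof of Mazur--Ulam, which works in convex sets when the point reflection $z\mapsto x+y-z$ maps the relevant midpoint set into itself.
\item The ball is symmetric about $0$, so for the pair $(x,-x)$ the reflection $z\mapsto -z$ preserves $B_X$. Väisälä's diameter-halving argument then gives $g(0)=\tfrac{g(x)+g(-x)}{2}$, i.e.\ $g(-x)=-g(x)$.
\item Radial linearity then follows from the fact that along the diameter $[-x,x]$ the image is a geodesic of length $2$ between $\pm g(x)$ passing through $0$ at the midpoint.
\end{itemize}

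The main obstacle is midpoint preservation for general pairs $x,y$ whose reflection $z\mapsto x+y-z$ does not preserve $B_X$, and where the target is not assumed to have interior. My plan here is a local-to-global argument via support points.
\begin{itemize}
\item For $x,y$ in a small ball $\B{}(0,r)$ with $r<1/2$, the reflection about $\tfrac{x+y}{2}$ maps the set $\{z:\|z-x\|+\|z-y\|\le \|x-y\|+\epsilon\}$ into $B_X$. The images behave symmetrically because the target is convex and $g^{-1}$ is isometric on it.
\item Väisälä's iteration then runs inside $B_X$ and yields midpoint-affinity of $g$ on $\tfrac12 B_X$.
\item The equality $g(tx)=tg(x)$ from the first step lets me rescale: for arbitrary $x,y\in B_X$, the points $x/2,y/2$ lie in $\tfrac12 B_X$, so
\[
g\Bigl(\tfrac{x+y}{2}\Bigr)=2g\Bigl(\tfrac{x+y}{4}\Bigr)=g(x/2)+g(y/2)=\tfrac{g(x)+g(y)}{2}.
\]
\end{itemize}
I expect the delicate point to be verifying that the shrinking iteration stays within the domain. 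That is exactly where the convexity of $C$ (through the algebraic midpoint being in the range) is needed, instead of an interior assumption.

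Once $g$ is midpoint-affine and continuous, I would conclude as follows.
\begin{itemize}
\item $g$ is affine on $B_X$ with $g(0)=0$. Define $Tx=\|x\|\,g(x/\|x\|)$; this is the unique positively homogeneous extension, and it is additive by the midpoint property on the ball after scaling. Hence $T$ is linear, and it is isometric because $g$ is.
\item Uniqueness holds because $B_X$ spans $X$.
\item Since $C-f(0)=T(B_X)$, its affine hull is $T(X)$.
\item If $X$ is Banach, $T(X)$ is complete and hence closed, and $T(B_X)=B_{T(X)}$ because $T$ is an isometry onto $T(X)$.
\end{itemize}
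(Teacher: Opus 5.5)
Your plan has a genuine gap at its core step: the claim that the target-side midpoint sets ``behave symmetrically because the target is convex and $g^{-1}$ is isometric on it'' is not justified.

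Running the Mazur--Ulam/V\"ais\"al\"a iteration inside $C$ identifies $g\bigl(\tfrac{x+y}{2}\bigr)$ only as the metric center of $\{g(x),g(y)\}$ \emph{relative to $C$}. That is the unique point of $\bigcap_n M_n^C$, where $M_0^C=\{w\in C:\norm{w-g(x)}=\norm{w-g(y)}=\tfrac12\norm{x-y}\}$ and $M_{n+1}^C$ is the set of points of $M_n^C$ within $\tfrac12\operatorname{diam}M_n^C$ of all of $M_n^C$. To conclude that this point is $m'=\tfrac{g(x)+g(y)}{2}$, you must show $m'\in M_n^C$ for every $n$. The only available mechanism is invariance of $M_n^C$ under $w\mapsto 2m'-w$, which requires $2m'-w\in C$ for $w\in M_0^C$. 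Convexity gives you $m'\in C$ and nothing more. Keeping the reflected sets inside the range is exactly what an interior point supplies in Mankiewicz's theorem, and whether $C$ has interior in $\overline{\spanR C}$ is the whole content of the Mori--Ozawa question.

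The same defect breaks your oddness step. Symmetry of $\B{X}$ shows only that $0$ is the metric center of $\{x,-x\}$ in $\B{X}$, hence that $g(0)=0$ is the metric center of $\{g(x),g(-x)\}$ in $C$. It says nothing about $\tfrac{g(x)+g(-x)}{2}$: in $\ell_\infty^2$, both $0$ and $(0,1)$ are metric midpoints of $(\pm1,1)$. Your radial-linearity step also fails, because ``the image of $[-x,x]$ is a geodesic through $0$'' does not force straightness in a non-strictly convex space. For example, $t\mapsto(t,\min(|t|,1-|t|))$ is an isometric copy of $[-1,1]$ in $\ell_\infty^2$ through $0$ that is not a segment.

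The missing idea is a proof that $C$ is a convex body in $Z=\overline{\spanR C}$. Once that is known, Mankiewicz's theorem applies and no midpoint argument on $C$ is needed. The paper obtains this without ever reflecting inside $C$, in the following steps.
\begin{enumerate}
\item The rescaled maps $H_n(x)=ng(x/n)$, built from the local isometries of Lemma~\ref{lem:local-ball}, induce an isometry $\Phi:\Q{X}\to\Q{Z}$. It is defined on a \emph{whole} Banach space and has convex range $\DK$.
\item Figiel's theorem therefore makes $\Phi$ linear, so $\DK$ is a subspace.
\item This forces every functional that is $\le0$ on $K$ to vanish on $K$. Hence no $f(p)$ with $\norm{p}<1$ is a support point of $C$.
\item Bishop--Phelps then gives $\partial_Z C\subset f(\Sph{X})$, that is, $f(p)\in\operatorname{int}_Z C$.
\end{enumerate}
The domain becomes a whole space only after passing to the sequence quotient. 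That is what makes a linearization theorem (Figiel's) available in place of the reflection argument your plan relies on.
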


Briefly, our approach works as follows. Given a surjective isometry defined on the closed unit ball, we construct a scaled sequence map into the quotient space \(\mathcal Q(E)=\ell_\infty(\mathbb N;E)/c_0(\mathbb N;E)\). Using Figiel's theorem, we deduce linearity for this quotient-space map. Combining this with Bishop-Phelps support-point argument, we show that the target convex set has nonempty interior, so Mankiewicz's theorem can be applied. Finally we handle incomplete normed spaces via metric completion.

\section{Preliminaries}

We recall several classical results that will be used in the sequel; proofs are omitted. The following  theorem is originally due to Figiel \cite{Figiel}. A proof can be found in
Godefroy \cite[Theorem~4 and the following Remark~1]{Godefroy}.

\begin{theorem}[Figiel]\label{thm:figiel}
Let $E$ and $F$ be real Banach spaces. If $J:E\to F$ is an isometric
embedding satisfying $J(0)=0$, then there exists a continuous real-linear operator
\[
  A:\overline{\spanR J(E)}^{\|\cdot\|}\longrightarrow E,
  \qquad \norm{A}\leq1,
\]
such that $A\circ J=\Id_E$.
\end{theorem}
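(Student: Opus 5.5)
The plan is to reduce the theorem to a norm inequality on finite combinations. Put $Y=\overline{\spanR J(E)}^{\|\cdot\|}$. It suffices to show that
\[
  \Bigl\|\sum_{i=1}^n a_i x_i\Bigr\|\le \Bigl\|\sum_{i=1}^n a_i J(x_i)\Bigr\|
  \qquad(a_i\in\R,\ x_i\in E).
\]
Given this, the formula $A\bigl(\sum a_iJ(x_i)\bigr)=\sum a_ix_i$ is well defined on $\spanR J(E)$, because a vanishing combination on the right forces a vanishing one on the left. It is linear with norm at most $1$, so it extends by continuity to $Y$, since $E$ is complete. It also satisfies $A\circ J=\Id_E$.

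To get the inequality, I would show that a norming subset $N\subset S_{E^*}$ is contained in $\{y^*\circ J: y^*\in B_{Y^*}\}$. Then for $x^*\in N$,
\[
  x^*\Bigl(\sum a_ix_i\Bigr)=y^*\Bigl(\sum a_iJ(x_i)\Bigr)\le\Bigl\|\sum a_iJ(x_i)\Bigr\|,
\]
and taking the supremum over $N$ gives the claim. Since each identity involves only finitely many points, I would first pass to separable subspaces: $E_0$, the closed span of the $x_i$ together with the points used below, and $J(E_0)$. Then Mazur's theorem supplies a dense set of Gâteaux-smooth points. By Bishop--Phelps, or directly by density, the functionals $x^*_z$, the unique norming functional at a smooth point $z\in S_{E_0}$, form a norming set. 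A Hahn--Banach extension from $E_0$ to $E$ is not needed, because the inequality only involves $E_0$.

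The core step is, for a smooth point $z$, to produce $y^*\in B_{Y^*}$ with $y^*\circ J=x^*_z$. For each $t>0$ choose $y_t^*\in S_{Y^*}$ norming $J(tz)-J(-tz)$, whose norm is $2t$. Using the isometry property, for every $u\in E$:
\begin{align*}
  y_t^*\bigl(J(tz+u)-J(-tz)\bigr)&\le\|2tz+u\|,\\
  y_t^*\bigl(J(tz)-J(tz+u)\bigr)&\le \|{-u}\|+\text{(correction)}.
\end{align*}
The second line comes from comparing $J(tz)-J(-tz)$ with $J(tz+u)-J(-tz)$. One also gets $y_t^*(J(sz))=s+c_t$ for $|s|\le t$, i.e. $y_t^*$ is affine of slope $1$ along the geodesic $s\mapsto J(sz)$. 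Sandwiching between points far out on this geodesic gives
\[
  y_t^*\bigl(J(w)\bigr)-y_t^*\bigl(J(0)\bigr)\le \|sz-w\|\cdot(-1)+s+o(1).
\]
Here one uses $\|sz\|-\|sz-w\|\to x_z^*(w)$ as $s\to\infty$, which is exactly Gâteaux differentiability at $z$. The symmetric estimate gives the reverse inequality. Taking a weak$^*$ cluster point $y^*$ of $(y_t^*)$ as $t\to\infty$, and using $J(0)=0$, yields $y^*(J(w))=x_z^*(w)$ for all $w$ and $\|y^*\|\le 1$.

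The main obstacle is this limiting argument. One must arrange that the error terms $\|sz-w\|-(s-x_z^*(w))$ vanish uniformly along the chosen sequence $s=s(t)\le t$, and that the cluster point exists. Working in the separable space $\overline{\spanR J(E_0)}$ keeps the dual ball weak$^*$ sequentially compact, and I would choose $s$ then $t$ diagonally so the smoothness limit is achieved before passing to the cluster point.
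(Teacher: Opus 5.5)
The paper does not prove this statement: it quotes Figiel's theorem and refers to Godefroy (Theorem~4 and Remark~1). Your argument is essentially that standard elementary proof, and it is correct. Its steps are:
\begin{itemize}
\item reduce to the inequality $\|\sum a_ix_i\|\le\|\sum a_iJ(x_i)\|$;
\item pass to a separable $E_0$ (in fact $E_0=\spanR\{x_1,\dots,x_n\}$, which is finite-dimensional, suffices);
\item use Mazur's density of G\^ateaux-smooth points to get a norming set of derivatives $x_z^*$;
\item for each smooth $z$, take a weak$^*$ cluster point of functionals norming $J(tz)-J(-tz)$.
\end{itemize}

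There is one slip and one overstated worry.
\begin{itemize}
\item The displayed sandwich bound has the wrong direction. For $0\le s\le t$ the exact bounds, with no $o(1)$, are $s-\|sz-w\|\le y_t^*(J(w))\le\|sz+w\|-s$, using $y_t^*(J(\pm sz))=\pm s$. Likewise the ``(correction)'' term in your second inequality is zero, since $y_t^*(J(tz)-J(tz+u))\le\|u\|$ directly.
\item The limiting step you flag as the main obstacle needs no diagonal choice and no uniformity. Taking $s=t$ gives $y_t^*(J(w))\to x_z^*(w)$ for every $w\in E_0$ by G\^ateaux differentiability. Hence any weak$^*$ cluster point $y^*$ satisfies $y^*\circ J=x_z^*$ on $E_0$ and $\|y^*\|\le1$. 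Such a cluster point exists by Alaoglu with nets, so no separability of the range is required.
\end{itemize}
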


\begin{corollary}\label{cor:whole-space}
Under the assumptions of Theorem~\ref{thm:figiel}, if $J(E)$ is convex,
then $J$ is real-linear.
\end{corollary}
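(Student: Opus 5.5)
Here $J(0)=0$, $J$ is an isometric embedding, and $J(E)$ is convex. The goal is to show that $J$ is real-linear. I will use the operator $A$ from Theorem~\ref{thm:figiel}, the fact that $J(E)$ is a convex set containing $0$, and the isometry property.

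\emph{Step 1: $J(E)$ is closed under scalar multiplication.} Take $y=J(x)\in J(E)$ and $t\ge 0$. For $t\le 1$, convexity and $0\in J(E)$ give $ty\in J(E)$. For $t>1$, I use the Figiel retraction: $A$ is linear with $\|A\|\le 1$, and $A$ restricted to $J(E)$ is the inverse of $J$, so it is an isometry there. Since $J(E)$ is convex and contains $0$, the set $\bigcup_n n^{-1}J(E)$ is a cone; the only question is whether $J(E)$ is all of it.

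Instead of settling that directly, I will show that $J$ is midpoint-affine, which suffices. Let $x_1,x_2\in E$. By convexity, $m=\tfrac12(J(x_1)+J(x_2))=J(z)$ for some $z\in E$. Applying the linear map $A$ gives
\[
z=A(m)=\tfrac12\bigl(AJ(x_1)+AJ(x_2)\bigr)=\tfrac12(x_1+x_2).
\]
Hence $J\bigl(\tfrac{x_1+x_2}{2}\bigr)=\tfrac12\bigl(J(x_1)+J(x_2)\bigr)$.

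\emph{Step 2: from midpoint-affine to linear.} A continuous midpoint-affine map is affine, by the standard dyadic argument plus continuity; $J$ is continuous because it is an isometry. Together with $J(0)=0$, this makes $J$ real-linear.

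This closes the argument. Step 1 above was not actually needed. The only point requiring care is that $A$ is defined on $\overline{\spanR J(E)}$, which contains $m$, and that $A\circ J=\Id_E$ is used at $z$, $x_1$ and $x_2$. I expect no real obstacle: the whole content is that the linear left inverse $A$ converts convex combinations inside $J(E)$ into convex combinations in $E$, and injectivity of $J$ then pins down $z$.
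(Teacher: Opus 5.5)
Your argument is correct and is essentially the paper's proof: both apply Figiel's linear left inverse $A$ to a convex combination of $J(x_1)$ and $J(x_2)$, which lies in $J(E)$ by convexity, to identify its preimage under $J$. The paper does this directly for an arbitrary weight $\lambda\in[0,1]$, so your restriction to midpoints followed by the dyadic/continuity argument is an unnecessary detour, though harmless; the abandoned Step~1 can simply be dropped.
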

\begin{proof}
Let \(u,v\in E\) and \(\lambda\in[0,1]\). Since \(J(E)\) is convex,
$$(1-\lambda)J(u)+\lambda J(v)\in J(E).$$
By definition of \(J(E)\), there exists some \(w\in E\) such that
$$J(w)=(1-\lambda)J(u)+\lambda J(v).$$
Apply the operator $A$ from Figiel's theorem to both sides. Since $A$ is real-linear,
\begin{eqnarray*}
w&=&\operatorname{Id}_E(w)=A(J(w))\\
&=&A\big((1-\lambda)J(u)+\lambda J(v)\big)\\
&=&(1-\lambda)A(J(u))+\lambda A(J(v))\\
&=&(1-\lambda)u+\lambda v.
\end{eqnarray*}

Thus
$$J\big((1-\lambda)u+\lambda v\big)
=(1-\lambda)J(u)+\lambda J(v),\quad \forall u,v\in E,\;\lambda\in[0,1].$$
This shows that $J$ is affine. Together with the assumption \(J(0)=0\), an affine map vanishing at origin is necessarily real-linear.
Therefore, $J$ is real-linear.
\end{proof}

\begin{definition}\label{def:support}
Let $D$ be a nonempty subset of a real Banach space $X$. A point $q\in D$
is called a \emph{support point} of $D$ in $X$ if there exists a nonzero
$\psi\in X^*$ such that
\[
  \psi(y)\leq\psi(q)\qquad(y\in D).
\]
We denote the collection of all such points by $\Supp_X(D)$.
\end{definition}

\begin{theorem}[Bishop--Phelps support point theorem \cite{BP}]\label{thm:BP}
Let $D$ be a nonempty closed convex subset of a real Banach space $X$.
Then
\begin{equation}\label{eq:BP}
  \partial_XD\subset\overline{\Supp_X(D)}^{\|\cdot\|}.
\end{equation}
In particular, the set of support points is norm dense in the norm boundary of $D$.
\end{theorem}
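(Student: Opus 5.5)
The Bishop--Phelps theorem is a classical result, so the plan is to reproduce the standard argument. It has two parts: first a Bishop--Phelps cone lemma, which is an Ekeland-type maximality argument in a complete space, and then a separation argument that turns maximal points into support points.

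Fix $x_0\in\partial_X D$ and $\varepsilon>0$. We must find a support point of $D$ within $\varepsilon$ of $x_0$. Since $x_0$ is a boundary point of the closed set $D$, there is a point $z\notin D$ with $\|z-x_0\|<\delta$, where $\delta$ is small and chosen later. Because $D$ is closed and convex, Hahn--Banach separation gives $\psi\in X^*$ with $\|\psi\|=1$ and $\psi(z)>\sup_D\psi$. In particular $\psi(x_0)>\sup_D\psi-\delta$, so $x_0$ nearly maximizes $\psi$ on $D$.

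For $k>1$, consider the cone
\[
K=\{y\in X:\ \|y\|\le k\,\psi(y)\}.
\]
This is a closed convex cone with nonempty interior; for instance $u\in\operatorname{int}K$ whenever $\psi(u)>1/k$ and $\|u\|=1$. Define the partial order $y\succeq x$ iff $y-x\in K$ on the set $D_0=D\cap(x_0+K)$.

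The core step is to show that $D_0$ has a $\succeq$-maximal element $x_1$ with $\|x_1-x_0\|\le k\delta$. Build a chain inductively. Set $x^{(0)}=x_0$. Given $x^{(n)}$, let
\[
C_n=D\cap(x^{(n)}+K),
\]
and choose $x^{(n+1)}\in C_n$ with
\[
\psi(x^{(n+1)})\ge\sup_{C_n}\psi-2^{-n}.
\]
Then $\psi(x^{(n)})$ is increasing and bounded by $\sup_D\psi$. Also $\|x^{(n+1)}-x^{(n)}\|\le k\bigl(\psi(x^{(n+1)})-\psi(x^{(n)})\bigr)$, so the sequence is Cauchy. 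Completeness of $X$ is used exactly here: the sequence converges to some $x_1$. Closedness of $D$ and of $K$ gives $x_1\in\bigcap_n C_n$. Maximality follows because any $y\succeq x_1$ lies in every $C_n$, hence $\psi(y)-\psi(x_1)\le 2^{-n}$ for all $n$, and so $\|y-x_1\|\le k(\psi(y)-\psi(x_1))=0$. The distance estimate is $\|x_1-x_0\|\le k(\psi(x_1)-\psi(x_0))\le k\delta$.

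Maximality means $(x_1+K)\cap D=\{x_1\}$. Hence the convex set $x_1+\operatorname{int}K$ is disjoint from $D$, since $0\notin\operatorname{int}K$. Because $\operatorname{int}K\neq\emptyset$, the Hahn--Banach separation theorem gives a nonzero $\varphi\in X^*$ with
\[
\varphi(y)\le\varphi(x_1+w)\quad\text{for all } y\in D,\ w\in K.
\]
Letting $w\to0$ gives $\varphi(y)\le\varphi(x_1)$ for all $y\in D$, so $x_1\in\Supp_X(D)$. Choosing $\delta$ with $k\delta<\varepsilon$, say $k=2$, yields $\|x_1-x_0\|<\varepsilon$, which proves~\eqref{eq:BP}.

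The main obstacle is the maximal-element step. It needs completeness of $X$, boundedness of $\psi$ on $D$, and the choice of $\psi$ that makes $x_0$ a $\delta$-maximizer so that the estimate $k\delta$ controls the distance. The boundedness concern disappears here because the near-maximality of $x_0$ already bounds $\psi$ on $D$ from above.
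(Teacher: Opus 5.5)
Your argument is correct. It is the standard Bishop--Phelps cone (maximal-element) proof from the cited source; the paper gives no proof of its own and quotes the theorem with proofs omitted. Two details are left implicit but are routine: the nesting $C_{n+1}\subset C_n$ (from $K+K\subset K$) is what places $x_1$ in every $C_n$, and the bound $\psi(x^{(n+1)})\le\psi(x_1)$ is what turns $\psi(y)\le\sup_{C_n}\psi$ into $\psi(y)-\psi(x_1)\le 2^{-n}$.
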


\begin{theorem}[Mankiewicz \cite{Mankiewicz}]\label{thm:mankiewicz}
Let $E$ and $F$ be real normed spaces. Let $D\subset E$ and $K\subset F$
be closed convex sets with nonempty norm interiors. Every surjective
isometry $h:D\to K$ extends uniquely to a surjective affine isometry
from $E$ onto $F$.
\end{theorem}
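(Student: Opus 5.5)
The plan has three steps: make $h$ affine on a small ball using a local Mazur--Ulam argument, propagate this to all of $\operatorname{int}_E D$, and then pass to the closure and read off the affine isometry $E\to F$. Uniqueness is immediate throughout, since an affine map is determined by its values on a set with nonempty interior.

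\emph{Step 1: a good ball.} I first find a ball on which both $h$ and $h^{-1}$ act between interior points. The map $h$ is a homeomorphism $D\to K$, and $\operatorname{int}_F K$ is a nonempty relatively open subset of $K$. Hence $h^{-1}(\operatorname{int}_F K)$ is a nonempty relatively open subset of $D$. Since $D$ is convex with nonempty interior, $\operatorname{int}_E D$ is dense in $D$, so this open set meets $\operatorname{int}_E D$. This gives a point $p$ and a radius $r>0$ with $B(p,4r)\subset D$ and $B(h(p),4r)\subset K$.

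\emph{Step 2: local Mazur--Ulam.} For $x,y\in B(p,r)$ put $z=\tfrac12(x+y)$, $d=\norm{x-y}$, and run V\"ais\"al\"a's reflection argument. Define
\[
W_1=\{u\in D:\norm{u-x}=\norm{u-y}=d/2\},
\]
and inductively let $W_{n+1}$ consist of the $u\in W_n$ with $\norm{u-v}\le \tfrac12\operatorname{diam}W_n$ for all $v\in W_n$. These sets are defined purely metrically, relative to $D$. Every point of $W_1$ lies within $d\le 2r$ of $z$, so the reflection $u\mapsto x+y-u$ keeps $W_1$ inside $B(p,4r)\subset D$. The usual argument then gives $\operatorname{diam}W_n\to0$ and $\bigcap_n W_n=\{z\}$.

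The same construction in $K$, with $h(x),h(y)$ and their midpoint $m$, gives $\bigcap_n W_n^K=\{m\}$; the reflection about $m$ stays in $B(h(p),4r)\subset K$ because $\norm{h(x)-h(y)}=d$. Since $h$ is an isometry of $D$ onto $K$, it carries $W_n$ onto $W_n^K$. Therefore $h(z)=m$, so $h$ preserves midpoints on $B(p,r)$. Continuity then makes $h$ affine there, equal to some affine $A(x)=h(p)+L(x-p)$ with $L$ a linear isometric embedding.

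\emph{Step 3: globalization.} This is the step I expect to be the main obstacle. Let $G$ be the set of $w\in\operatorname{int}_E D$ such that $h=A$ on a neighbourhood of $w$. The set $G$ is open and nonempty, and I want it relatively closed in the connected set $\operatorname{int}_E D$. Let $w$ lie in the closure of $G$, and pick $w'\in G$ close to $w$.

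The difficulty is to re-run Step 2 around $w$. That needs $h(w)$, or at least $h$ of nearby points, to lie in $\operatorname{int}_F K$. I would get this from the fact that $A$ is open onto its image near $w'$, combined with the density argument of Step 1 applied to $h^{-1}$: the affine isometry $h^{-1}\circ A$ fixes a ball, which should force interior points to correspond to interior points along segments from $w'$. If this works, $h$ is affine near $w$ and agrees with $A$ on an open overlap, so $w\in G$.

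\emph{Conclusion.} Once $h=A$ on $\operatorname{int}_E D$, continuity gives $h=A$ on $D=\overline{\operatorname{int}_E D}$. Then $L(E)$ is a linear subspace containing the translate $K-h(p)$, which has nonempty interior. Hence $L(E)=F$, and $A$ is a surjective affine isometry $E\to F$ extending $h$.
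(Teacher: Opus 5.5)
The paper does not prove this theorem; it quotes it from Mankiewicz without proof, so your argument has to stand on its own.

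\textbf{What is sound.} Steps 1 and 2 and your concluding paragraph are correct. The V\"ais\"al\"a sets are defined metrically relative to $D$ and $K$, so $h$ carries them onto each other. The reflections stay inside $B(p,2r)\subset D$ and $B(h(p),2r)\subset K$. Moreover, $h$ maps $B(p,r)$ onto the open ball $B(h(p),r)$. Hence $L$ is already surjective at the end of Step 2, and $A$ is a surjective affine isometry.

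\textbf{The gap is Step 3}, which you leave conditional. To show that $G$ is relatively closed in $\operatorname{int}_E D$, you need $h(w)\in\operatorname{int}_F K$ for $w\in\overline G\cap\operatorname{int}_E D$. Nothing you have established gives this. Continuity only yields $h(w)=A(w)\in\overline{A(G)}$, which a priori could lie on $\partial K$.

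The mechanism you suggest, pushing interiority along segments using $h=A$ on a ball, cannot work from distance information alone. In a non-strictly convex norm a point is not determined by its distances to the points of a ball. For example, in $\ell_\infty^2$ the points $(10,0)$ and $(10,1)$ are equidistant from every point near $0$. This non-uniqueness of metric midpoints is exactly the difficulty the theorem must overcome, so the step is a missing idea, not a routine detail.

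\textbf{One way to close the gap.}
\begin{enumerate}
\item Put $\phi=A^{-1}\circ h:D\to K':=A^{-1}(K)$. Then $\phi=\mathrm{id}$ on $B(p,r)$, and $p$ lies in the convex open set $\Omega:=\operatorname{int}_E D\cap\operatorname{int}_E K'$.
\item Run your connectedness argument on $\Omega$ rather than on $\operatorname{int}_E D$. If $w\in\Omega$ is a limit of points near which $\phi=\mathrm{id}$, then $\phi(w)=w$ is interior to $K'$ by the definition of $\Omega$. So Step 2 applies at $w$: $\phi$ is affine near $w$ and agrees with the identity on an open set, hence $\phi=\mathrm{id}$ near $w$.
\item Therefore $\phi=\mathrm{id}$ on $\Omega$. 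It follows that $\phi=\mathrm{id}$ on $\overline\Omega=D\cap K'$, since $(1-t)x+tp\in\Omega$ for $x\in D\cap K'$ and $t\in(0,1]$.
\item Suppose $x\in D\setminus K'$, and let $x^*$ be the last point of the segment $[p,x]$ lying in $K'$. Then $x^*\in\operatorname{int}_E D\setminus\operatorname{int}_E K'$ and $\phi(x^*)=x^*$.
\item Choose a ball $S=B(x^*,\varepsilon)\subset D$. Because $\phi$ is an isometry of $D$ onto $K'$ fixing $x^*$, we get $\phi(S)=K'\cap S=:T$. Since $T\subset D\cap K'$, we have $\phi|_T=\mathrm{id}$, so $\phi(T)=T=\phi(S)$.
\item Injectivity of $\phi$ then forces $S=T\subset K'$. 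This contradicts $x^*\notin\operatorname{int}_E K'$.
\end{enumerate}
Hence $D\subset K'$, so $\phi=\mathrm{id}$ on $D$, that is, $h=A$ on $D$. Your final paragraph then completes the proof.
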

\begin{remark}\label{rem:complete-only}
Let $X,Y$ be real normed spaces, let $C\subset Y$ be convex, and let
$f:\B{X}\to C$ be a surjective isometry. By the standard extension of isometries to metric completions, and since the closure of a convex set remains convex, it suffices to first treat the case of Banach spaces. In that
case, we translate the range by $-f(0)$ and retain the same notation for the
translated map and set. We then have
\begin{equation}\label{eq:normalized}
  f(0)=0,\qquad 0\in C,\qquad
  \norm{f(x)}=\norm{x}\quad(x\in\B{X}).
\end{equation}
Since $\B{X}$ is complete and $f$ is a surjective isometry, $C$ is
complete and therefore closed in $Y$. Set
\begin{equation}\label{eq:ambient}
  Z=\overline{\spanR C}^{\|\cdot\|}.
\end{equation}
Then $Z$ is Banach, $C$ is closed in $Z$, and the closed real affine
hull of $C$ equals $Z$.
\end{remark}

For a real Banach space $E$, define
\begin{align*}
  \ell_\infty(\N;E)
  &=\{(u_n)_{n\geq1}:\sup_n\norm{u_n}<\infty\},\\
  c_0(\N;E)
  &=\{(u_n)_{n\geq1}:\lim\limits_{n\to \infty}\norm{u_n}=0\},\\
  \Q{E}&=\ell_\infty(\N;E)/c_0(\N;E).
\end{align*}
The space $\ell_\infty(\N;E)$ is Banach and $c_0(\N;E)$ is a closed
linear subspace, so $\Q{E}$ is a real Banach space. The equivalence class of a
bounded sequence $(u_n)$ is denoted by $[u_n]$. For $v\in E$, let
\[
  \iota_E(v)=[v,v,\ldots]
\]
denote the constant-sequence embedding.

We use the standard quotient-norm formula
\begin{equation}\label{eq:quotient-norm}
  \norm{[u_n]}_{\Q{E}}=\limsup_{n\to\infty}\norm{u_n}.
\end{equation}
Consequently, $[u_n]=[v_n]$ if and only if $\norm{u_n-v_n}\to0$.
The map $\iota_E:E\to\Q{E}$ is a linear isometric embedding.

\section{Main results}

\begin{proposition}\label{prop:rescaling}
Let $X,Z$ be real Banach spaces, let $K\subset Z$ be convex, and let
$g:\B{X}\to K$ be a surjective isometry with $g(0)=0$.
For $n\geq1$, define a map on all of $X$ by
\[
  H_n(x)=
  \begin{cases}
    n g(x/n),&\norm{x}\leq n,\\
    0,&\norm{x}>n.
  \end{cases}
\]
Then
\begin{equation}\label{eq:Phi}
  \Phi:\Q{X}\longrightarrow\Q{Z},
  \qquad \Phi([x_n])=[H_n(x_n)]
\end{equation}
is a well-defined real-linear isometric embedding. Its range is
\begin{equation}\label{eq:DK}
  \DK=
  \bigl\{[y_n]:(y_n)\in\ell_\infty(\N;Z),\
                \exists k\in \mathbb{N}\text{ such that }\  y_n\in nK\  \text{ for all } n\ge k\bigr\}.
\end{equation}
In particular, $\DK$ is a closed real-linear subspace of $\Q{Z}$.
\end{proposition}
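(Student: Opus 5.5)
We would prove Proposition~\ref{prop:rescaling} in four steps: well-definedness and isometry, identification of the range, linearity via Corollary~\ref{cor:whole-space}, and closedness.

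\textbf{Step 1: well-definedness and the isometry property.} Let $(x_n)\in\ell_\infty(\N;X)$ with $\sup_n\norm{x_n}=M$. For $n\geq M$ we have $\norm{x_n}\leq n$, so $H_n(x_n)=ng(x_n/n)$. By \eqref{eq:normalized} for $g$, $\norm{H_n(x_n)}=\norm{x_n}\leq M$. Hence $(H_n(x_n))$ is bounded; the finitely many indices $n<M$ are irrelevant modulo $c_0$. If $(x_n),(x_n')$ are bounded, then for all large $n$ both $x_n/n$ and $x_n'/n$ lie in $\B{X}$, and
\[
\norm{H_n(x_n)-H_n(x_n')}=n\norm{g(x_n/n)-g(x_n'/n)}=\norm{x_n-x_n'}.
\]
Taking $\limsup$ and using \eqref{eq:quotient-norm} shows that $\Phi$ is well defined (it does not depend on the representative) and is an isometry. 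In addition, $\Phi(0)=0$ because $g(0)=0$.

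\textbf{Step 2: the range is $\DK$.} The inclusion $\Phi(\Q{X})\subset\DK$ is immediate: for $n\geq M$, $H_n(x_n)=ng(x_n/n)\in nK$.

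For the converse, take $[y_n]\in\DK$ with $\sup_n\norm{y_n}=M'$ and $y_n\in nK$ for $n\geq k$. By surjectivity of $g$, choose $x_n\in\B{X}$ with $g(x_n)=y_n/n$, and put $x_n'=nx_n$ (set $x_n'=0$ for $n<k$). Then $\norm{x_n'}=n\norm{g(x_n)}=\norm{y_n}\leq M'$, so $(x_n')$ is bounded. Moreover $H_n(x_n')=y_n$ for $n\geq k$, so $\Phi([x_n'])=[y_n]$.

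\textbf{Step 3: $\DK$ is convex, and hence $\Phi$ is linear.} We check directly that $\DK$ is a real-linear subspace, using convexity of $K$ and $0\in K$.

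First, $nK$ is convex and contains $0$, so $t\,nK\subset nK$ for $t\in[0,1]$. This gives closure under multiplication by scalars in $[0,1]$.

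The key observation for sums and larger scalars is that bounded sequences can be absorbed by rescaling. Suppose $y_n\in nK$ and $\norm{y_n}\leq M'$. For a fixed $c>1$, the points $cy_n$ need not lie in $nK$. However, $y_n/m\in (n/m)K$, so $c y_n\in (cn)K$; the issue is that this is the wrong index. To fix this, we use Step 2 in reverse. Write $y_n=H_n(x_n)$ with $(x_n)$ bounded, and set
\[
w_n=ng(cx_n/n),
\]
which is defined for large $n$. Then $[w_n]\in\DK$, but proving $[w_n]=c[y_n]$ is exactly what linearity would give, so this route is circular.

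Instead we avoid proving linear-subspace properties by hand. It suffices to show that $\DK$ is convex: then Corollary~\ref{cor:whole-space} applied to the isometric embedding $J=\Phi$ (with $E=\Q{X}$, $F=\Q{Z}$, $J(0)=0$ and $J(E)=\DK$ convex) yields that $\Phi$ is real-linear. Convexity of $\DK$ is elementary. If $y_n,y_n'\in nK$ for $n\geq k$, then
\[
(1-\lambda)y_n+\lambda y_n'\in nK
\]
because $nK$ is convex, and the combined sequence remains bounded.

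Once $\Phi$ is linear, its range $\DK$ is automatically a linear subspace.

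\textbf{Step 4: closedness.} $\DK$ is the isometric image of the complete space $\Q{X}$, so it is complete and therefore closed in $\Q{Z}$.

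The only delicate point is Step 3: recognising that convexity, rather than the subspace property, is what can be checked directly, and that Corollary~\ref{cor:whole-space} then supplies linearity.
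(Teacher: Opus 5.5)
Your proposal is correct and matches the paper's proof step for step: the tail identity $\norm{H_n(x_n)-H_n(x_n')}=\norm{x_n-x_n'}$ gives well-definedness and the isometry property, the inverse sequence $ng^{-1}(y_n/n)$ identifies the range, the elementary convexity of $\DK$ is fed into Corollary~\ref{cor:whole-space} to get linearity, and completeness of the isometric image gives closedness. The circular detour in Step 3 is harmless and can simply be deleted; also, the identity $\norm{g(x)}=\norm{x}$ that you attribute to \eqref{eq:normalized} follows directly from $g(0)=0$ and the isometry property.
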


\begin{proof}

Since $g$ fixes the origin, we have $\norm{g(x)}=\norm{x}$ for $x\in\B{X}$.
It follows that
\[
  \norm{H_n(x)}\leq\norm{x},\qquad \forall x\in X.
\]

If $(x_n)$ and $(x'_n)$ are bounded sequences, then for all  sufficiently large $n$
both $\norm{x_n}$ and $\norm{x'_n}$ are at most $n$, and hence
\begin{equation}\label{eq:tail-distance}
  \norm{H_n(x_n)-H_n(x'_n)}=\norm{x_n-x'_n}.
\end{equation}
Therefore, input sequences differing by a null sequence yield output sequences
differing by a null sequence. This proves well-definedness.
By \eqref{eq:quotient-norm},
\begin{align*}
  \norm{\Phi([x_n])-\Phi([x'_n])}_{\Q{Z}}
  &=\limsup_n\norm{H_n(x_n)-H_n(x'_n)}\\
  &=\limsup_n\norm{x_n-x'_n}\\
  &=\norm{[x_n]-[x'_n]}_{\Q{X}}.
\end{align*}
Thus $\Phi$ is an isometry and $\Phi(0)=0$.

Since $0\in K$, we have $H_n(x_n)\in nK$ for every $n$, so
$\Phi(\Q{X})\subset\DK$. Conversely, let $[y_n]\in\DK$ and choose
a bounded representative with $y_n\in nK$ for $n\geq N$. Define
\begin{equation}\label{eq:inverse-sequence}
  x_n=n g^{-1}(y_n/n)\quad(n\geq N),
  \qquad x_n=0\quad(n<N).
\end{equation}
The inverse is well-defined because $y_n/n\in K$. As $g^{-1}$ preserves
distance from zero,
\begin{equation}\label{eq:bounded-inverse}
  \norm{x_n}=n\norm{y_n/n}=\norm{y_n}\qquad(n\geq N).
\end{equation}
Thus $(x_n)$ is bounded. Also $x_n/n\in\B{X}$ and
$H_n(x_n)=y_n$ for $n\geq N$. Finite initial differences vanish in
the quotient, proving $\Phi([x_n])=[y_n]$ and hence
$\Phi(\Q{X})=\DK$.

Let $[y_n],[z_n]\in\DK$ and $t\in[0,1]$. Choose bounded
representatives that eventually lie in $nK$. For sufficiently large
$n$, convexity of $nK$ gives
\[
  (1-t)y_n+tz_n\in nK.
\]
The resulting sequence is bounded since
\[
  \norm{(1-t)y_n+tz_n}\leq(1-t)\norm{y_n}+t\norm{z_n}.
\]
Therefore $\DK$ is convex.

The spaces $\Q{X}$ and $\Q{Z}$ are real Banach spaces. We have shown
that $\Phi$ is an isometric embedding fixing 0 and whose range is convex. Corollary~\ref{cor:whole-space} therefore implies that $\Phi$ is
real-linear. Its range is consequently a linear subspace. The range
is complete, being isometric to $\Q{X}$, and so it is closed in
$\Q{Z}$.
\end{proof}

\begin{remark}\label{rem:quotient-only}
The linearity in Proposition~\ref{prop:rescaling} concerns $\Phi$ on
the entire sequence quotient. No linearity of the original ball map
$g$ has been assumed or inferred at this stage.
\end{remark}

\begin{lemma}\label{lem:center}
Under the hypotheses of Proposition~\ref{prop:rescaling}, if $\psi\in Z^*$
satisfies $\psi(z)\leq0$ for every $z\in K$, then $\psi$ vanishes on $K$.
\end{lemma}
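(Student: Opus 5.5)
The plan is to exploit the linearity of $\Phi$ from Proposition~\ref{prop:rescaling}: its range $\DK$ is a linear subspace, hence symmetric. We transfer the functional $\psi$ to the quotient and use this symmetry to show that $\psi$ must also be nonnegative on $K$.

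First, define $\Psi:\Q{Z}\to\R$ on representatives, e.g. $\Psi([y_n])=\limsup_n \psi(y_n/n)\cdot n$. Such a functional is not obviously well defined, so it is better to test pointwise along sequences rather than build a functional. Fix $z\in K$. Then $\iota_Z(z)=[z,z,\ldots]$ lies in $\DK$, since $z\in K\subset nK$ for all $n\ge1$ (this uses $0\in K$ and convexity). Because $\DK$ is a linear subspace, $-\iota_Z(z)=[-z,-z,\ldots]\in\DK$. By the definition \eqref{eq:DK}, this class has a bounded representative $(y_n)$ with $y_n\in nK$ for all $n\ge k$, and by \eqref{eq:quotient-norm} we have $\norm{y_n+z}\to0$.

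Next, apply $\psi$. For $n\ge k$ we can write $y_n=nw_n$ with $w_n\in K$, so the hypothesis gives $\psi(y_n)=n\psi(w_n)\le0$. On the other hand, continuity of $\psi$ yields
\[
\psi(y_n)\to\psi(-z)=-\psi(z).
\]
Hence $-\psi(z)\le0$, that is, $\psi(z)\ge0$. Combined with the hypothesis $\psi(z)\le0$, this gives $\psi(z)=0$. Since $z\in K$ was arbitrary, $\psi$ vanishes on $K$.

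The only delicate point is to check that $\iota_Z(z)\in\DK$. This requires $K\subset nK$, which follows from convexity of $K$ together with $0\in K$ (guaranteed by $g(0)=0$). We also need the representative of $-\iota_Z(z)$ to lie eventually in $nK$. That is exactly the definition of membership in $\DK$, so the step is immediate once Proposition~\ref{prop:rescaling} is invoked. No Bishop--Phelps argument is needed here.
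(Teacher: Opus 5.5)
Your proposal is correct and follows the paper's proof step by step: you place the constant class $\iota_Z(z)$ in $\DK$ using $0\in K$ and convexity, use that $\DK$ is a linear subspace to get a representative of $-\iota_Z(z)$ lying eventually in $nK$, and pass to the limit under $\psi$. The opening attempt to define a functional $\Psi$ on $\Q{Z}$ plays no role in the argument and can be deleted.
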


\begin{proof}
Take any $v\in K$. By convexity of $K$ and the fact that $0\in K$, we have $v/n\in K$ for all integers
$n\geq1$. Hence
$$\iota_Z(v)\in\DK.$$

By Proposition~\ref{prop:rescaling}, $\DK$ is  a linear subspace,
so $$-\iota_Z(v)=\iota_Z(-v) \in \DK.$$

By the definition of $\DK$, there exists a bounded sequence $(y_n)$ and some $k\in \mathbb{N}$ such that
$y_n\in nK$ for all $n\ge k$, and
\[
  [y_n]=\iota_Z(-v).
\]

Therefore,
\begin{equation}\label{eq:negative-approximation}
  \lim\limits_{n\to\infty}\norm{y_n+v}=0.
\end{equation}

For each $n\ge k$, write $y_n=nk_n$ where $k_n\in K$. Then
$\psi(y_n)=n\psi(k_n)\leq0$. By continuity of $\psi$ and
\eqref{eq:negative-approximation}, we obatin $\psi(-v)\leq0$, or equivalently $\psi(v)\geq0$.
The hypothesis yields the reverse inequality, so $\psi(v)=0$.
Since $v\in K$ was arbitrary, $\psi$ vanishes on $K$.
\end{proof}

\begin{lemma}\label{lem:local-ball}
Let $X,Z$ be real normed spaces, let $C\subset Z$ be convex, and let
$f:\B{X}\to C$ be a surjective isometry. Suppose $p\in X$ and $r>0$
satisfy $p+r\B{X}\subset\B{X}$, and set $q=f(p)$. Then the map
\[
  g(x)=\frac{f(p+rx)-q}{r},\qquad \forall x\in\B{X}
\]
is an   isometry. Moreover, $g(0)=0$, $g(B_X)$ is convex, and
\begin{equation}\label{eq:local-image}
  g(B_X)=K=\frac{C-q}{r}\cap\B{Z}.
\end{equation}
\end{lemma}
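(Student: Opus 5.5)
The plan is to obtain everything from the fact that $f$ is a surjective isometry, together with the elementary behaviour of norms under translation and dilation. No earlier result of the paper is needed beyond the definitions. First, $g$ is well defined: for $x\in\B{X}$ we have $p+rx\in p+r\B{X}\subset\B{X}$. For $x,x'\in\B{X}$,
\[
  \norm{g(x)-g(x')}=\frac{1}{r}\norm{f(p+rx)-f(p+rx')}=\frac{1}{r}\norm{rx-rx'}=\norm{x-x'},
\]
so $g$ is an isometry. Also $g(0)=(f(p)-q)/r=0$ by the choice $q=f(p)$.

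The main step is the identity \eqref{eq:local-image}. Since $g(\B{X})=\bigl(f(p+r\B{X})-q\bigr)/r$, it suffices to show
\[
  f(p+r\B{X})=C\cap(q+r\B{Z}).
\]
For ``$\subset$'', take $w=p+rx$ with $x\in\B{X}$. Then $f(w)\in C$ and $\norm{f(w)-q}=\norm{f(w)-f(p)}=\norm{w-p}=r\norm{x}\le r$.

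For ``$\supset$'', take $y\in C$ with $\norm{y-q}\le r$. By surjectivity of $f$ onto $C$ there is some $w\in\B{X}$ with $f(w)=y$. Then $\norm{w-p}=\norm{f(w)-f(p)}=\norm{y-q}\le r$, so $w\in p+r\B{X}$, and hence $y\in f(p+r\B{X})$.

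This reverse inclusion is the only point where surjectivity is used, and it is the one step needing care. The fact that makes it work is that every preimage $w$ of $y$ automatically satisfies $\norm{w-p}=\norm{y-q}\le r$, so $w$ lands in the small ball $p+r\B{X}$ and not merely in $\B{X}$. Dividing by $r$ after translating by $-q$ then gives $g(\B{X})=\frac{C-q}{r}\cap\B{Z}$.

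Finally, $\frac{C-q}{r}$ is convex as an affine image of the convex set $C$, and $\B{Z}$ is convex. Their intersection $K=g(\B{X})$ is therefore convex, which completes the proof of the lemma.
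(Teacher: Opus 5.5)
Your proposal is correct and follows essentially the same route as the paper: you verify the isometry and $g(0)=0$ directly, and you get the reverse inclusion by noting that any preimage $u=f^{-1}(q+rv)$ satisfies $\norm{u-p}=r\norm{v}\le r$. Convexity then follows because $K$ is an intersection of two convex sets. Phrasing the identity as $f(p+r\B{X})=C\cap(q+r\B{Z})$ before rescaling is only a cosmetic difference.
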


\begin{proof}
The definition together with the isometric property of $f$ yields $g(0)=0$ and
\[
  \norm{g(x)-g(x')}=\norm{x-x'}\qquad \forall x,x'\in\B{X}.
\]
For $x\in\B{X}$, we have $f(p+rx)\in C$ and
$\norm{g(x)}=\norm{x}\leq1$, which implies $g(\B{X})\subset K$.

Conversely, take $v\in K$. Then $q+rv\in C$. Let
$u=f^{-1}(q+rv)\in\B{X}$. Since
\[
  \norm{u-p}=\norm{f(u)-q}=r\norm{v}\leq r,
\]
we obtain  $x=(u-p)/r\in\B{X}$ and $g(x)=v$. This establishes the equality
in \eqref{eq:local-image}. The set on the right-hand side is convex, as it is the intersection of two convex sets.
\(\square\)
\end{proof}

\begin{proposition}\label{prop:support}
Let $X,Z$ be real Banach spaces, let $C\subset Z$ be convex, and let
$f:\B{X}\to C$ be a surjective isometry satisfying
\begin{equation}\label{eq:full-span}
  f(0)=0,\qquad \overline{\spanR C}^{\|\cdot\|}=Z.
\end{equation}
Then
\begin{equation}\label{eq:support-sphere}
  \Supp_Z(C)\subset f(\Sph{X}).
\end{equation}
\end{proposition}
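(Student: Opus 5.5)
We argue by contradiction, combining Lemma~\ref{lem:local-ball} with Lemma~\ref{lem:center}. Let $q\in\Supp_Z(C)$ with a nonzero $\psi\in Z^*$ satisfying $\psi(y)\le\psi(q)$ for all $y\in C$, and write $q=f(p)$ with $p\in\B{X}$. Suppose, towards a contradiction, that $\norm{p}<1$. Then there is $r>0$ with $p+r\B{X}\subset\B{X}$; for instance $r=1-\norm{p}$ works.

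The first step is to localize. Lemma~\ref{lem:local-ball} gives an isometry
\[
  g(x)=\frac{f(p+rx)-q}{r},\qquad x\in\B{X},
\]
with $g(0)=0$ and convex image $K=\frac{C-q}{r}\cap\B{Z}$. Thus $g:\B{X}\to K$ is a surjective isometry onto a convex set fixing $0$, and the hypotheses of Proposition~\ref{prop:rescaling} hold with this $K\subset Z$. For $z\in K$ we have $q+rz\in C$, so $\psi(q+rz)\le\psi(q)$, that is, $\psi(z)\le0$. Lemma~\ref{lem:center} then shows that $\psi$ vanishes on $K$.

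The second step, which is the main point, is to pass from $K$ back to $C$. Take any $c\in C$. The segment from $q$ to $c$ lies in $C$ by convexity, so for small $t>0$ the point $z=t(c-q)/r$ satisfies $z\in\frac{C-q}{r}$ and $\norm{z}\le1$. Hence $z\in K$ and $\psi(z)=0$, which gives $\psi(c)=\psi(q)$ for all $c\in C$. Since $0=f(0)\in C$, this yields $\psi(q)=\psi(0)=0$, so $\psi$ vanishes on $C$. By linearity and continuity $\psi$ then vanishes on $\overline{\spanR C}^{\|\cdot\|}=Z$, contradicting $\psi\neq0$.

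Therefore $\norm{p}=1$, so $q=f(p)\in f(\Sph{X})$, which proves \eqref{eq:support-sphere}. The real work has already been done in Lemma~\ref{lem:center}, which rests on the linearity of $\Phi$ from Proposition~\ref{prop:rescaling}. The remaining care is in checking that the localized set $K$ is convex and contains small multiples of every direction $c-q$, which makes the extension of the vanishing of $\psi$ from $K$ to all of $C$ immediate.
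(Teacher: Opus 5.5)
Your proposal is correct and follows the paper's proof essentially step for step. You localize at $p$ via Lemma~\ref{lem:local-ball}, use Lemma~\ref{lem:center} to get $\psi\equiv0$ on $K$, transfer this to $C$ along segments from $q$, and conclude with $0\in C$ and density of the span; the only difference is your choice $r=1-\norm{p}$ instead of the paper's $r=(1-\norm{p})/2$, which is equally valid.
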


\begin{proof}
Take $p\in X$ with $\norm{p}<1$, set $q=f(p)$  and define
\[
  r=\frac{1-\norm{p}}2>0.
\]
Then $p+r\B{X}\subset\B{X}$. Apply Lemma~\ref{lem:local-ball} to
obtain a surjective isometry
$$g(x)=\frac{f(p+rx)-q}{r}:\B{X}\to K=\frac{C-q}{r}\cap\B{Z},$$ which fixes the origin.

Suppose $q\in\Supp_Z(C)$. Then there exists a nonzero $\psi\in Z^*$ such that
\begin{equation}\label{eq:supporting}
  \psi(y)\leq\psi(q),\qquad\forall y\in C.
\end{equation}
For any $k\in K$, we have $q+rk\in C$, so \eqref{eq:supporting} yields
$\psi(k)\leq0$. Applying Lemma~\ref{lem:center}  to $g$, we now get
\begin{equation}\label{eq:vanishing-local}
  \psi(k)=0,\qquad\forall k\in K.
\end{equation}

Fix $y\in C$. Choose $s\in(0,1]$ such that $s\norm{y-q}\leq r$.
Convexity of $C$ implies $q+s(y-q)\in C$, and hence
\[
  \frac{s(y-q)}r\in K.
\]
Equation~\eqref{eq:vanishing-local} then gives $\psi(y)=\psi(q)$.
Thus $\psi$ is constant on $C$. Since $0\in C$, this constant value is zero.
Linearity forces $\psi$ to vanish on $\spanR C$, and together with
\eqref{eq:full-span}, continuity implies $\psi=0$ on $Z$, a contradiction.

We have therefore shown that $f(p)$ cannot be a support point whenever
$\norm{p}<1$. Surjectivity of $f$ then yields
$$ \Supp_Z(C)\subset f(\Sph{X}).$$
This completes the proof.

\end{proof}

\begin{proposition}\label{prop:interior}
Under the hypotheses of Proposition~\ref{prop:support}, if $C$ is
closed in $Z$, then
\begin{equation}\label{eq:interior}
  f\bigl(\operatorname{int}_X\B{X}\bigr)
  \subset\operatorname{int}_Z C.
\end{equation}
In particular, $0\in\operatorname{int}_Z C$.
\end{proposition}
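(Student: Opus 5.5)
We prove that $f(\operatorname{int}_X\B{X})$ lies in $\operatorname{int}_Z C$ by combining Proposition~\ref{prop:support} with the Bishop--Phelps theorem (Theorem~\ref{thm:BP}) and a distance argument. Fix $p\in X$ with $\norm{p}<1$ and put $q=f(p)$, $\delta=1-\norm{p}>0$. Since $C$ is a nonempty closed convex subset of the Banach space $Z$, every boundary point of $C$ is a norm limit of support points (Theorem~\ref{thm:BP}). By Proposition~\ref{prop:support}, every support point of $C$ lies in $f(\Sph{X})$.

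Suppose, for contradiction, that $q\notin\operatorname{int}_Z C$. Because $C$ is closed and $q\in C$, this means $q\in\partial_Z C$. So there is a sequence of support points $q_j\in\Supp_Z(C)$ with $q_j\to q$. Write $q_j=f(u_j)$ with $u_j\in\Sph{X}$. Since $f$ is an isometry,
\[
\norm{q_j-q}=\norm{u_j-p}\geq\norm{u_j}-\norm{p}=1-\norm{p}=\delta>0
\]
for every $j$. This contradicts $q_j\to q$. Hence $q\in\operatorname{int}_Z C$.

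The last assertion follows by taking $p=0$: since $f(0)=0$ by \eqref{eq:full-span}, we get $0\in\operatorname{int}_Z C$.

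The argument is short, and the only delicate point is that Theorem~\ref{thm:BP} is applied to $C$ inside $Z$ rather than inside some larger ambient space. The hypotheses needed for this are already in place: $Z$ is Banach, $C$ is closed in $Z$ by assumption, $C$ is convex and nonempty, and the support functionals are taken from $Z^*$. This matches Definition~\ref{def:support} and the setting of Proposition~\ref{prop:support}.
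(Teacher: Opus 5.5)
Your proof is correct and follows the paper's route: Bishop--Phelps combined with Proposition~\ref{prop:support} places $\partial_Z C$ inside the closure of $f(S_X)$. The only difference is the last step. Your isometry estimate $\norm{f(u)-f(p)}\geq 1-\norm{p}$ for $u\in S_X$ replaces the paper's argument, which shows $f(S_X)$ is closed in $Z$ (since $f$ is a homeomorphism and $C$ is closed) and then uses injectivity; your version is slightly more direct and quantitative.
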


\begin{proof}
The isometry $f$ is a homeomorphism from $\B{X}$ onto $C$.
Since $\Sph{X}$ is closed in $\B{X}$, its image $f(S_X)$ is closed in $C$.
As $C$ itself is closed in $Z$, $f(\Sph{X})$ is closed in $Z$.

By the Bishop-Phelps theorem and together with Proposition~\ref{prop:support},
\begin{equation}\label{eq:boundary-sphere}
  \partial_Z C
  \subset\overline{\Supp_Z(C)}^{\|\cdot\|}
  \subset f(\Sph{X}).
\end{equation}
If $\norm{p}<1$, injectivity of $f$ implies $f(p)\notin f(\Sph{X})$, so
$f(p)\notin\partial_Z C$. For a closed set,
\[
  C\setminus\partial_Z C=\operatorname{int}_Z C.
\]
Since $f(p)\in C$, we conclude that $f(p)\in\operatorname{int}_Z C$.
This establishes \eqref{eq:interior}, and substituting $p=0$ yields the final assertion.
\end{proof}

\begin{mainclaim}
Let $X$ and $Y$ be real normed spaces, let $C\subset Y$ be a nonempty
convex set, and let
\[
  f:B_{X}\longrightarrow C
\]
be a surjective isometry. Then $f$ is affine. More precisely, there exists a
unique real-linear isometric embedding $T:X\to Y$ such that
\begin{equation}\label{eq:extension}
  f(x)=f(0)+Tx\qquad(x\in B_{X}).
\end{equation}
Moreover,
\begin{equation}\label{eq:affine-hull}
  \affR C=f(0)+T(X).
\end{equation}
If $X$ is Banach, then $T(X)$ is closed in $Y$ and
\[
  C-f(0)=B_{T(X)}
\]
with respect to the norm inherited from $Y$.
\end{mainclaim}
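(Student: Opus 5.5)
We first treat the Banach case and then pass to completions, as indicated in Remark~\ref{rem:complete-only}. So assume $X$ and $Y$ are Banach. Translate by $-f(0)$ to get the normalization \eqref{eq:normalized}. Then $C$ is complete, hence closed in $Y$. Put $Z=\overline{\spanR C}^{\|\cdot\|}$ as in \eqref{eq:ambient}; it is a Banach space containing $C$ as a closed convex subset with $\overline{\spanR C}=Z$. The hypotheses of Proposition~\ref{prop:support} therefore hold, and Proposition~\ref{prop:interior} gives $0\in\operatorname{int}_Z C$. Thus $f:\B{X}\to C$ is a surjective isometry between closed convex sets with nonempty interiors in $X$ and $Z$. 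By Mankiewicz's Theorem~\ref{thm:mankiewicz}, $f$ extends to a surjective affine isometry $F:X\to Z$. Since $F(0)=f(0)=0$, $F=T$ is a real-linear surjective isometry $X\to Z$, which we view as a linear isometric embedding into $Y$. Uniqueness is clear, because $T$ is determined on $\B{X}$, which spans $X$.

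Next we identify the range. We have $T(X)=Z$, which is closed in $Y$. Moreover $C=f(\B{X})=T(\B{X})=\B{T(X)}$, because $T$ is an isometry onto $T(X)$. Hence $\affR C=\spanR C$ (as $0\in C$), and this equals $\spanR T(\B{X})=T(X)$. Undoing the translation gives \eqref{eq:extension}, \eqref{eq:affine-hull} and $C-f(0)=\B{T(X)}$.

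For general normed spaces, let $\widehat X$ and $\widehat Y$ be the completions. The ball $\B{\widehat X}$ is the closure of $\B{X}$ in $\widehat X$. The isometry $f$ extends uniquely to an isometry $\hat f:\B{\widehat X}\to\widehat Y$. Its image is $\overline{C}$, closure taken in $\widehat Y$: it contains $C$, it is complete hence closed, and it is contained in $\overline C$ by continuity. This set is convex. The Banach case then yields a linear isometric embedding $\widehat T:\widehat X\to\widehat Y$ with $\hat f(x)=f(0)+\widehat T x$. Set $T=\widehat T|_X$.

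The main obstacle is showing $T(X)\subset Y$; a priori $T$ only maps into $\widehat Y$. For $x\in X$, pick $t>0$ with $tx\in\B{X}$. Then $\widehat T(tx)=f(tx)-f(0)\in Y$, so $Tx\in Y$ by linearity. Thus $T:X\to Y$ is a real-linear isometric embedding and \eqref{eq:extension} holds on $\B{X}$. Finally, $C-f(0)=T(\B{X})$, so $\affR C-f(0)=\spanR T(\B{X})=T(X)$, which gives \eqref{eq:affine-hull}. The Banach-specific assertion was already settled in the Banach case.
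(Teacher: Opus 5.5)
Your proposal is correct and follows essentially the same route as the paper. You reduce to the Banach case, normalize $f(0)=0$, work in $Z=\overline{\spanR C}^{\|\cdot\|}$, use Propositions~\ref{prop:support} and~\ref{prop:interior} to get $0\in\operatorname{int}_Z C$, and apply Mankiewicz's Theorem~\ref{thm:mankiewicz}. Beyond that, you only spell out the completion step that the paper leaves to Remark~\ref{rem:complete-only}, in particular the scaling argument showing $\widehat T$ maps $X$ into $Y$; the paper instead gets closedness of $T(X)$ in a possibly incomplete $Y$ directly from completeness of $T(X)\cong X$.
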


\begin{proof}
If $X=\{0\}$, then $C$ is a singleton, and all claims follow immediately. Assume $X\neq\{0\}$. By Remark~\ref{rem:complete-only},
 we may assume that $X$ and $Y$ are Banach  and
$f(0)=0$. Set
$$Z=\overline{\spanR C}^{\|\cdot\|}.$$
 Then $Z$ is Banach and $C$ is closed in $Z$.

Propositions~\ref{prop:support} and \ref{prop:interior} yield
$0\in\operatorname{int}_Z C$. Thus $\B{X}$ and $C$ are closed convex
sets with nonempty interiors in $X$ and $Z$, respectively.
Mankiewicz's theorem extends $f$ uniquely to a surjective affine
isometry $T:X\to Z$. Since $T(0)=0$, this extension is real-linear.
Reversing the translation recovers formula \eqref{eq:extension}.

Any two linear extensions that agree on $\B{X}$ coincide on all   $x\in X$ by scaling. Surjectivity of $f$ implies
$C-f(0)=T(\B{X})$. Since $0\in C-f(0)$ and $\spanR\B{X}=X$,
\begin{align*}
  \affR C
  &=f(0)+\spanR\bigl(C-f(0)\bigr)\\
  &=f(0)+T(X),
\end{align*}
which establishes \eqref{eq:affine-hull}.

If $X$ is Banach, then $T(X)$ is complete under the inherited norm, hence closed in $Y$ even if $Y$ itself is not complete. Since $T$ maps $X$
onto $T(X)$ isometrically, $T(\B{X})=\B{T(X)}$. This proves the final assertion of the main theorem.
\end{proof}
\begin{remark}
For complex normed spaces, the same conclusion holds by passing to the underlying real normed space.
\end{remark}

Mori and Ozawa \cite[Lemma~4]{MO} showed that if $\B{X}$ possesses the strong
Mankiewicz property, then every convex body in $X$ also has this property.
This yields the following consequence of the Main Theorem.

\begin{corollary}\label{cor:all-bodies}
Every convex body in any real
or complex normed space has the strong Mankiewicz property, where
affinity is interpreted over the real scalars.
\end{corollary}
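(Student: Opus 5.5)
The statement is the Corollary: every convex body in a real or complex normed space has the strong Mankiewicz property. The plan is to reduce it to the Main Theorem through \cite[Lemma~4]{MO}, and to handle the complex case by passing to real scalars.

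\textbf{Step 1: reduction to the real case.} If $X$ is complex, I replace $X$ by its underlying real normed space $X_{\R}$. The norm, the metric and the notion of convexity over $[0,1]$ do not change. Convex bodies in $X$ are exactly the convex bodies in $X_{\R}$, and real-affinity is the property being claimed. In the same way I view any target space $Y$ as real. So it suffices to treat real normed spaces.

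\textbf{Step 2: the unit ball has the SMP.} The Main Theorem shows that every surjective isometry from $\B{X}$ onto a convex subset of any real normed space is affine. In other words, $\B{X}$ has the SMP.

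\textbf{Step 3: transfer to all convex bodies.} Let $D\subset X$ be a convex body and $h:D\to C$ a surjective isometry onto a convex set. I would invoke \cite[Lemma~4]{MO} directly. If one wants the mechanism, it runs as follows.

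First, fix an interior point $p$ of $D$ and $r>0$ with $p+r\B{X}\subset D$. The map $x\mapsto (h(p+rx)-h(p))/r$ on $\B{X}$ is an isometry. Its image is convex, by the same argument as Lemma~\ref{lem:local-ball}, with $D$ in place of $\B{X}$ and the image identified as $\frac{C-h(p)}{r}\cap \B{Y}$.

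Second, by Step 2 this map is affine. So $h$ is affine on every small ball inside $\operatorname{int} D$.

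Third, local affinity on the convex, hence connected, open set $\operatorname{int} D$ propagates to global affinity there. Fix a segment $[a,b]\subset\operatorname{int}D$. Local affine maps on overlapping balls agree on their overlaps, so one obtains a single affine extension along the segment. Since $\overline{\operatorname{int}D}\supset D$ for a convex body, continuity extends affinity to all of $D$.

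\textbf{Main obstacle.} The delicate point is the identification of the local image in Step 3. One must check that every point of $\frac{C-h(p)}{r}\cap\B{Y}$ comes from $p+r\B{X}$. This requires that for $u\in D$ with $\norm{u-p}\le r$ we have $u\in p+r\B{X}$, which holds trivially. The rest is routine bookkeeping, which is exactly the content of \cite[Lemma~4]{MO} cited in the paper.
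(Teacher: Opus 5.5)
Your proposal is correct and follows the paper's route. The Main Theorem gives the SMP for $\B{X}$, Mori--Ozawa's Lemma~4 transfers it to every convex body, and the complex case is handled by passing to the underlying real space. Your sketch of the mechanism behind Lemma~4 is also sound: on balls $p+r\B{X}\subset D$ the image is the convex set $C\cap(h(p)+r\B{Y})$, so $h$ is locally affine, this propagates across the connected set $\operatorname{int}D$, and continuity extends it to $D$.
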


\subsection*{Acknowledgements}
The authors are grateful to the referees for their constructive comments and suggestions.


\begin{thebibliography}{99}
\bibitem{Figiel}
T.~Figiel,
On nonlinear isometric embeddings of normed linear spaces,
\emph{Bull. Acad. Polon. Sci. S\'er. Sci. Math. Astronom. Phys.}
\textbf{16} (1968), 185--188.

\bibitem{Godefroy}
G.~Godefroy,
Linearization of isometric embeddings between Banach spaces:
An elementary approach,
\emph{Operators and Matrices} \textbf{6} (2012), no.~2, 339--345.


\bibitem{BP}
E.~Bishop and R.~R.~Phelps,
The support functionals of a convex set,
in \emph{Convexity}, Proceedings of Symposia in Pure Mathematics,
vol.~7, American Mathematical Society, 1963, pp.~27--35.

\bibitem{Mankiewicz}
P.~Mankiewicz,
On extension of isometries in normed linear spaces,
\emph{Bull. Acad. Polon. Sci. S\'er. Sci. Math. Astronom. Phys.}
\textbf{20} (1972), 367--371.

\bibitem{MO}
M.~Mori and N.~Ozawa,
Mankiewicz's theorem and the Mazur--Ulam property for $C^*$-algebras,
\emph{Studia Mathematica} \textbf{250} (2020), no.~3, 265--281.

\bibitem{Sc}
G. Schechtman,
Generalizing the Mazur-Ulam theorem to convex sets with empty interior,
\emph{MathOverflow}, https://mathoverflow.net/questions/225577.
\end{thebibliography}
\end{document}